\date{\today}
\def\End{{\rm End}}
\def\deg{\text{deg}\,}
\def\dbar{\bar\partial}
\def\R{{\mathbb R}}
\def\C{{\mathbb C}}
\def\P{{\mathbb P}}
\def\Im{{\rm Im\, }}
\def\O{{\mathcal O}}
\def\U{{\mathcal U}}
\def\codim{\text{codim}\,}
\def\supp{\text{supp}\,}
\def\1{\mathbf 1}
\def\Z{{\mathbb Z}}
\def\J{{\mathcal J}}
\def\np{\mathcal{NP}}
\def\be{\begin{equation}}
\def\ee{\end{equation}}
\def\J{{\mathcal J}}
\def\poly{{\mathcal P}}
\def\Q{{\mathcal P}}
\newtheorem{thm}{Theorem}[section]
\theoremstyle{definition}
\theoremstyle{remark}
\newtheorem{preremark}[thm]{Remark}
\newtheorem{preex}[thm]{Example}
\newenvironment{remark}{\begin{preremark}}{\qed\end{preremark}}
\numberwithin{equation}{section}
\begin{document}

\title[Some variants of Macaulay's and Max Noether's Theorems]{Some variants of Macaulay's and \\ Max Noether's Theorems}

\date{\today}
\thanks{The author was partially supported by the Swedish Research Council and NSF grant DMS-0901073} 

\author{Elizabeth Wulcan}

\address{Dept of Mathematics, University of Michigan, Ann Arbor \\ MI 48109-1043\\ USA}

\email{wulcan@umich.edu}

\subjclass{}

\keywords{}


\maketitle

\begin{abstract}
We use residue currents on toric varieties to obtain bounds on the support of solutions to polynomial ideal membership problems. Our bounds depend on the Newton polytopes of the polynomial systems and are therefore well adjusted to sparse systems of polynomials. We present variants of classical results due to Macaulay and Max Noether. 
\end{abstract}

\smallskip
\begin{center}
\emph{Dedicated to Ralf Fr\"oberg on the occasion of his 65th birthday}
\end{center}

\section{Introduction}\label{intro}
Let $F_1, \ldots, F_m$, and $\Phi$ be polynomials in $\C^n$. Assume that $\Phi$ vanishes on the common zero set of the $F_j$. Then \emph{Hilbert's Nullstellensatz} asserts that there are polynomials $G_1,\ldots, G_m$ such that 
\begin{equation}\label{hilbert}
\sum_{j=1}^m F_j G_j = \Phi ^\nu
\end{equation}
for some integer $\nu$ large enough. The following bound of the degrees of the $F_j$ and $\nu$ was obtained by Koll{\'a}r, ~\cite{Kollar}, for $d\neq 2$, and by Jelonek, ~\cite{Jelonek}, for $d=2$ and $m\leq n$: \\
\noindent
\emph{Assume that $\deg F_j \leq d$. Then one can find $G_j$ so that \eqref{hilbert} holds for some $\nu \leq d^{\min(m,n)}$ and
\begin{equation}\label{kollupp}
\deg (F_j G_j) \leq (1 + deg \Phi )d^{\min (m,n)}.
\end{equation}
}
\noindent
For $d=2$ and $m\geq n+1$ the best bound is due to Sombra, \cite{Sombra}: the factor $d^{\min (m,n)}$ in \eqref{kollupp} should then be replaced by $2^{n+1}$. Koll{\'a}r's and Jelonek's bounds are sharp; the original formulations also take into account different degrees of the $F_j$. 
In many cases, however, one can do much better. 
Classical results due to Max Noether, ~\cite{Not}, and Macaulay, ~\cite{Mac}, show that the bounds can be substantially improved if (the homogenizations of) the $F_j$ have no zeros at infinity. The aim of this note is to use multidimensional residues on toric varieties to obtain some variants of these 
results.

Multidimensional residues have been used as a tool to solve polynomial ideal membership problems by several authors, see for example \cite{BGVY}. 
In ~\cite{A3} Andersson used residue currents on manifolds to obtain effective solutions; in particular, Macaulay's and Max Noether's results follow by applying his methods to complex projective space. 

Recall that the \emph{support} $\supp F$ of a polynomial $F=\sum_{\alpha\in\Z^n} c_\alpha z^\alpha =\sum_{\alpha\in\Z^n} c_\alpha z_1^{\alpha_1}\cdots z_n^{\alpha_n}$ in $\C^n$ is defined as $\supp F=\{\alpha\in\Z^n \text{ such that } c_\alpha \neq 0\}$ and that the \emph{Newton polytope} $\np(F_1,\ldots, F_m)$ of polynomials $F_1,\ldots, F_m$ is the convex hull of $\bigcup_j \supp F_j$ in $\R^n$. 
In particular, a polynomial of degree $d$ has support in $d\Sigma^n$, where $\Sigma^n$ is the $n$-dimensional simplex in $\R^n$ with the origin and the unit lattice points $e_1=(1,0,\ldots,0), e_2=(0,1,0,\ldots,0), \ldots, e_n=(0,\ldots, 0, 1)$ as vertices. 

Using techniques from toric geometry Sombra ~\cite{Sombra} obtained a sparse effective Nullstellensatz, which improves Koll{\'a}r's result when the system of polynomials is sparse, meaning that $\np(F_1,\ldots, F_m)$ is small compared to $d\Sigma^n$. In ~\cite{W} the author used the residue current techniques developed in ~\cite{A3} applied to toric varieties in order to obtain certain sparse effective versions of polynomial ideal membership problems. This note, in which we focus on the case when $F_j$ have no common zeros at infinity, can be seen as an addendum to ~\cite{W}. We will specify in Section ~\ref{results} how no common zeros at infinity should be interpreted. 

We work on toric varieties associated with the Newton polytopes or the support of the ~$F_j$. 
Given a lattice polytope $\poly$, i.e., a polytope in $\R^n$ with vertices in $\mathbb Z^n$, one can construct a toric variety $X_\poly$ and a line bundle $\O(D_\poly)$ on $X_\poly$ whose global sections correspond to polynomials with support in $\poly$, see Section ~\ref{toric}. The toric variety $X_\poly$ is smooth if for each vertex $v$ of $\poly$ the smallest integer normal directions of the facets of $\poly$ containing $v$ form a base for $\Z^n$, see ~\cite[p.~29]{F}. We then say that the lattice polytope $\poly$ is \emph{smooth} (or \emph{Delzant}) with respect to the lattice $\Z^n$.

The following sparse version of Macaulay's Theorem is due to Castryck-Denef-Vercauteren, ~\cite{CDV}. 
\begin{thm}\label{mac}
Let $F_1, \ldots, F_m$, and $\Phi$ be polynomials in $\C^n$. Assume that the $F_j$ have no common zeros even at infinity, and that $\supp\Phi\subseteq e\np(F_1,\ldots, F_m)$, where $e\np(F_1,\ldots, F_m)$ is a lattice polytope. 
Then there are polynomials $G_j$ that satisfy 
\begin{equation}\label{ideal}
\sum_{j=1}^mF_jG_j=\Phi
\end{equation}
and
\begin{equation*}
\supp (F_j G_j)\subseteq \max(n+1,e) \np(F_1,\ldots, F_m).
\end{equation*}
\end{thm}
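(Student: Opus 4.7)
The plan is to work on the toric variety $X = X_\poly$ associated with $\poly = \np(F_1,\ldots,F_m)$, along the lines of \cite{W}, and convert the ideal-membership problem into a cohomology vanishing statement controlled by the Newton polytope. The $F_j$ become global sections of the nef line bundle $L = \O(D_\poly)$ on $X$, the hypothesis $\supp \Phi \subseteq e\poly$ says $\Phi \in H^0(X, \O(eD_\poly))$, and the condition that the $F_j$ have no common zeros even at infinity (as interpreted in Section~\ref{results}) says that the section $f = (F_1,\ldots, F_m)$ of $E = L^{\oplus m}$ is nowhere vanishing on $X$. Consequently the Koszul complex
\begin{equation*}
0 \to \bigwedge^m E^* \to \cdots \to \bigwedge^2 E^* \to E^* \xrightarrow{\;f\;} \O_X \to 0, \qquad \bigwedge^k E^* \cong \O(-kD_\poly)^{\binom{m}{k}},
\end{equation*}
is an exact resolution of $\O_X$ by locally free sheaves.

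Set $N = \max(n+1, e)$. The inclusion $e\poly \subseteq N\poly$ lets us view $\Phi \in H^0(X, \O(ND_\poly))$, and the conclusion of the theorem---existence of $G_j$ with $\sum F_j G_j = \Phi$ and $\supp(F_j G_j) \subseteq N\poly$---is equivalent to surjectivity of the multiplication map
\begin{equation*}
H^0\bigl(X, \O((N-1)D_\poly)^{\oplus m}\bigr) \longrightarrow H^0\bigl(X, \O(ND_\poly)\bigr), \qquad (G_j) \mapsto \sum F_j G_j.
\end{equation*}
Twisting the exact Koszul resolution by $\O(ND_\poly)$, breaking it into short exact sequences of syzygy subsheaves $C_k = \mathrm{image}(d_{k+1})$, and chasing through the resulting long exact cohomology sequences, the surjectivity reduces to the vanishings
\begin{equation*}
H^j\bigl(X, \O((N-j-1)D_\poly)\bigr) = 0, \qquad j = 1, \ldots, \min(m-1, n).
\end{equation*}

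For $N \geq n+1$ one has $N - j - 1 \geq 0$ throughout this range, so the twisting line bundle is either a positive multiple of the nef bundle $L$ or the structure sheaf $\O_X$; in either case its higher cohomology vanishes, by toric Kodaira vanishing for nef line bundles (and by $H^j(X, \O_X) = 0$ for $j \geq 1$ on proper toric varieties). Hence $N = \max(n+1, e)$ simultaneously delivers the support bound and the required cohomology vanishing. The main obstacle is the non-smooth case: when $\poly$ is not Delzant the toric variety $X_\poly$ has quotient singularities, so one must either invoke the corresponding vanishing theorem for nef toric line bundles on singular complete toric varieties, or pull back to a smooth toric refinement $\pi\colon \widetilde X \to X_\poly$, run the argument on $\widetilde X$, and descend the solution using the identity $\pi_* \O_{\widetilde X}(\pi^* k D_\poly) = \O(k D_\poly)$, which guarantees that the $G_j$ correspond to polynomials with the prescribed Newton support on $\Cn$. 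The residue current machinery of \cite{A3} and \cite{W} streamlines this by producing the $G_j$ through explicit integral formulas against the residue current of $f$, which vanishes precisely because $f$ has no common zeros on $X$; the positivity of $\O(ND_\poly)$ encoded by $N \geq n+1$ is exactly what makes every auxiliary $\dbar$-equation arising in that construction solvable.
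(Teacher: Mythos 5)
Your proposal is correct and follows essentially the same route as the paper: the paper deduces Theorem~\ref{mac} from Theorem~\ref{tuit}, whose proof works on a smooth toric compactification compatible with $\np(F_1,\ldots,F_m)$, uses exactness of the Koszul complex of the nowhere-vanishing section $f$ (equivalently, vanishing of the associated residue current), and obtains the global solution from the vanishing of $H^{0,q}$ of the globally generated bundles $\O(D_{(N-q-1)\poly})$ for $N\geq n+1$ --- exactly the Demazure-type vanishing you invoke, with Theorem~\ref{forsvinnande} packaging the cohomology chase you carry out by hand. Your concern about singularities of $X_\poly$ is handled in the paper just as you suggest, by passing to a regular refinement of the normal fan from the outset.
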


In particular, one can find polynomials $G_j$ that satisfy 
\begin{equation}\label{ett}
\sum_{j=1}^mF_jG_j=1
\end{equation}
and
\begin{equation}\label{lasse}
\supp (F_j G_j)\subseteq (n+1) \np(F_1,\ldots, F_m).
\end{equation}
Macaulay's Theorem, ~\cite{Mac}, corresponds to the case when $\Q=d\Sigma^n$, i.e., $\deg F_j\leq d$. Then \eqref{lasse} reads $\deg(F_jG_j)\leq (n+1)d$, which is slightly worse that Macaulay's original result: \\
\noindent
\emph{
Assume that $F_j$ have no common zeros even at infinity (in $\P^n$). Then one can find $G_j$ that satisfy \eqref{ett} and $\deg (F_j G_j) \leq (n+1)d-n$.}\\
\noindent

Theorem ~\ref{mac} can be seen as a special case of the following sparse version of Max Noether's Theorem, ~\cite{Not}. Let $(F)$ denote the ideal generated by $F_1,\ldots,F_m$.
\begin{thm}\label{maxthm}
Let $F_1, \ldots, F_m$ be polynomials in $\C^n$ and let $\Q$ be a smooth lattice polytope that contains the origin and the support of the $F_j$ and the coordinate functions $z_1,\ldots, z_n$. Assume that the $F_j$ have no common zeros at infinity. Then there is a number $\nu_F$, such that if $\Phi\in (F)$ satisfies that $\supp \Phi\subseteq e\Q$, where $e\Q$ is a lattice polytope, then there are polynomials $G_j$ that satisfy \eqref{ideal}
and
\begin{equation}\label{fetflasket}
\supp (F_j G_j)\subseteq \max (\nu_F, e) \Q. 
\end{equation}
\end{thm}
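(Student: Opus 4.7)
The plan is to follow the residue-current approach of Andersson~\cite{A3}, applied to the smooth projective toric variety $X = X_\Q$ as in the author's earlier work~\cite{W}. The hypothesis that the smooth lattice polytope $\Q$ contains $0$, the lattice points $e_1,\ldots,e_n$, and the supports of the $F_j$ ensures that $X$ is smooth, that $L = \O(D_\Q)$ is ample, and that each $F_j$, each coordinate function $z_i$, and each $\Phi$ with $\supp \Phi \subseteq e\Q$ (with $e\Q$ a lattice polytope) defines a global section of $L$, $L$, and $L^e$ respectively.

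Next I would form the bundle morphism $f\colon \bigoplus_{j=1}^m L^{-1} \to \O_X$ given by $(g_j) \mapsto \sum_j F_j g_j$ and its associated Koszul complex, and attach to $f$ Andersson's pseudomeromorphic residue current $R^F$ with values in $\Lambda^\bullet(\bigoplus L^{-1})$. Its support coincides with the common zero locus $Z$ of the $F_j$, and the hypothesis that the $F_j$ have no common zeros at infinity forces $Z \subset \C^n \subset X$. Combining the algebraic ideal membership $\Phi \in (F)$ in $\C[z_1,\ldots,z_n]$ with the duality principle for residue currents then yields the key vanishing $\Phi R^F = 0$ as currents on all of $X$.

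From $\Phi R^F = 0$, Andersson's integral representation produces, at any sufficiently large twist $\mu$, a decomposition $\Phi = \sum_j F_j G_j$ in which the $G_j$ arise from successively solving $\dbar$-equations on $X$ in line bundles of the form $L^{\mu-1} \otimes \Omega_X^p$. Since $L$ is ample, Demazure vanishing on the smooth projective toric $X$ supplies a threshold $\nu_F$, depending on $L$ and on the finitely many Hefer forms interpolating the $F_j$ (and so only on the $F_j$ and $\Q$), such that the higher cohomology groups obstructing the unwinding vanish whenever $\mu \geq \nu_F$. The resulting $G_j$ are then global sections of $L^{\mu-1}$, i.e., polynomials with $\supp(F_j G_j) \subseteq \mu\Q$. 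Running the argument at $\mu = \max(\nu_F, e)$, which is legitimate because $e\Q \subseteq \mu\Q$ lets us view $\Phi$ as a section of $L^\mu$, gives~\eqref{fetflasket}.

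The main difficulty is showing that a single $\nu_F$ works uniformly over all $\Phi \in (F)$ with $e\Q$ a lattice polytope. This reduces to isolating the cohomological obstructions in the unwinding step as depending only on fixed data built from $F$ (the current $R^F$ together with a fixed family of Hefer forms for the $F_j$) and on the twist $\mu$, while $\Phi$ enters only as a zeroth-order factor on the right-hand side; once this separation is in place, Demazure vanishing applied to a finite list of line bundles determined by $F$ delivers the uniform threshold $\nu_F$.
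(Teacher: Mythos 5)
Your overall architecture (pass to $X_\Q$, build a residue current for $F$, show $\Phi$ annihilates it using the absence of zeros at infinity plus membership on $\C^n$, then solve the division globally once enough cohomology vanishes) matches the paper. But there is a genuine gap at the central step: you take the \emph{Koszul complex} of $f$ and invoke ``the duality principle'' to conclude $\Phi R^F=0$ from $\Phi\in(F)$. Duality for the Koszul (Bochner--Martinelli type) current holds only when $F_1,\ldots,F_m$ form a complete intersection; Theorem~\ref{maxthm} makes no such assumption (the zero set is discrete because it is compact in $\C^n$, but $m$ may exceed $n$), and in general the ideal $(F)$ is \emph{not} contained in the annihilator of the Koszul current --- this failure is precisely the reason for the construction in \cite{AW}. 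So the key vanishing $\Phi R^F=0$ does not follow from your argument. The paper instead extends $E_1\stackrel{f^1}{\to}E_0$ to a complex whose associated sheaf complex is an exact (locally free) resolution of the ideal sheaf, available because $X_\Q$ is projective, and applies Theorem~\ref{residysats}(b) to that complex's current, for which annihilation is equivalent to membership.

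This choice of complex is also what forces the shape of the conclusion. With the Koszul complex the twists $L^{\mu}\otimes\Lambda^{q+1}E^*$ are sums of $\O(D_{(\mu-q-1)\Q})$, which are globally generated as soon as $\mu\geq n+1$; if your route were valid you would get the uniform bound $\nu_F=n+1$, i.e.\ Theorem~\ref{mac} without any hypothesis on zeros in $\C^n$, which is false. In the paper the bundles $E_{q+1}$ come from the (non-explicit) resolution, and $\nu_F$ is produced by Serre vanishing for $\O(D_\Q)^{\otimes\nu}\otimes E_{q+1}$, $\nu\geq\nu_F$ --- finitely many bundles depending only on $F$ and $\Q$, which is exactly the uniformity in $\Phi$ you were worried about, and which explains why $\nu_F$ is tied to the Castelnuovo--Mumford regularity of $(F)$ rather than being explicit. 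Your treatment of the locus at infinity ($\supp R\subseteq Z\subseteq\C^n$) is fine and agrees with the paper's $\1_{V_\infty}R=0$.
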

In fact, Theorem ~\ref{maxthm} is a sparse version of a result in the forthcoming paper ~\cite{AW3}. As Theorem ~\ref{maxthm} is stated above the common zero set of the $F_j$ has to be discrete. It is, however, possible to replace the assumption that the $F_j$ lack common zeros at infinity by a less restrictive assumption, see Remark ~\ref{relaxa}.

The reason that we require $\Q$ to be smooth in Theorem ~\ref{maxthm} is that we need a certain line bundle to be ample, see Section \ref{results}. For example, $\Q=d\Sigma^n$ is smooth; with this choice \eqref{fetflasket} reads $\deg (F_jG_j)\leq \max (\nu_F d, \deg\Phi)$. 

Theorem ~\ref{maxthm} is a variant of Max Noether's Theorem, \cite{Not}, in the sense that $\Phi$ is assumed to be in $(F)$ and the $F_j$ are assumed to have no zeros at infinity. In the original formulation, $F_1,\ldots, F_m$ are moreover assumed to form a \emph{complete intersection}, i.e., the codimension of $\{F_1=\ldots =F_m=0\}$ is $m$: \\
\noindent
\emph{
Assume that the zero-set of $F_1,\ldots, F_n$ is discrete and contained in $\C^n$ and that $\Phi\in(F)$. Then there are $G_j$ that satisfy \eqref{ideal} and $\deg (F_j G_j) \leq \deg\Phi$.}\\
\noindent
Note that if $\supp \Phi$ (or $\deg\Phi$) is large enough, then the bound \eqref{fetflasket} coincides with Max Noether's bound; indeed $\nu_F$ only depends on the $F_j$. In ~\cite[Theorem 1.2]{W} was presented a sparse versions of Noether's Theorem, which essentially says, that if the $F_j$ form a complete intersection, then Theorem ~\ref{maxthm} holds with $\nu_F=0$. To be precise, the polytope $e\Q$ has to satisfy an additional condition.

If the $F_j$ lack common zeros, then Theorem ~\ref{mac} says that we can choose $\nu_F=n+1$. In general, we do not have an explicit description of $\nu_F$, see the discussion after the proof of Theorem ~\ref{maxthm}.

\smallskip 

Recall that the polynomial $\Phi$ lies in the \emph{integral closure} of $(F)$ if $\Phi$ satisfies a monic equation $\Phi^r+H_1 \Phi^{r-1} + \cdots + H_r=0$, where $H_j\in(F)^j$ for $1\leq j\leq r$ or, equivalently, if $\Phi$ locally satisfies $|\Phi|\leq C|F|$, where $|F|^2=|F_1|^2+\cdots + |F_m|^2$. If $\Phi$ is in the integral closure of $(F)$, then the Brian{\c c}on-Skoda Theorem,~\cite{BS}, asserts that one can solve \eqref{hilbert} with $\nu=\min(m,n)$. 
Our next result is a sparse effective Brian{\c c}on-Skoda Theorem, which also can be seen as a generalization of Macaulay's Theorem. Indeed, when the $F_j$ have no common zeros, the assumption below that $\Q$ contains the origin is automatically satisfied and then any polynomial $\Phi$ is in the integral closure of $(F)$. 
\begin{thm}\label{macbs}
Let $F_1, \ldots, F_m$, and $\Phi$ be polynomials in $\C^n$ and let $\Q$ be a lattice polytope that contains the origin and the support of the $F_j$. Assume that the $F_j$ have no common zeros at infinity. Moreover assume that $\Phi$ is in the integral closure of $(F)$ and that $\supp \Phi\subseteq e\Q$, where $e\Q$ is a lattice polytope. Then there are polynomials $G_j$ that satisfy 
\begin{equation}\label{bs}
\sum_{j=1}^m F_j G_j = \Phi^n
\end{equation} 
and
\begin{equation}\label{basse}
\supp (F_j G_j)\subseteq \max (n+1, ne) \Q.
\end{equation}
\end{thm}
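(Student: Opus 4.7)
The plan is to adapt the residue current approach used for Theorems~\ref{mac} and~\ref{maxthm} (originating in \cite{A3} and developed on toric varieties in \cite{W}) so as to exploit the integral closure hypothesis through the Brian{\c c}on--Skoda property of residue currents.

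I would first pass to a smooth toric compactification $X$ obtained by subdividing the normal fan of $\Q$ until it becomes regular. The polynomials $F_j$ then lift to global sections $f_j$ of (the pullback to $X$ of) the line bundle $L := \O(D_\Q)$, and $\Phi$ lifts to a global section of $L^{\otimes e}$. Since the $F_j$ have no common zeros at infinity, the common zero set $Z$ of $f_1,\ldots,f_m$ on $X$ lies in the affine chart $\C^n \subseteq X$, so $Z$ is compact.

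Following \cite{A3}, I would then construct the residue current $R^f$ associated with $f = (f_1,\ldots,f_m)$ viewed as a section of $E := L^{\oplus m}$; its components vanish in bidegrees $(0,k)$ for $k > n$. The integral closure hypothesis says that $|\Phi| \leq C|f|$ locally, and the Brian{\c c}on--Skoda property of residue currents (see \cite{A3}) then yields $\Phi^n R^f = 0$. The Koppelman-type division formula of \cite{A3} produces a global identity
\begin{equation*}
\Phi^n = \sum_{j=1}^m F_j G_j + T
\end{equation*}
on $X$, where the obstruction $T$ is controlled by $H^{0,k}(X, L^{\otimes s})$ for $1 \leq k \leq n$, with $s$ the twist into which the $G_j$ are forced.

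For $T$ to vanish I would invoke Demazure (toric Kodaira) vanishing: $H^{0,k}(X, L^{\otimes s}) = 0$ for $k \geq 1$ as soon as $sD_\Q + K_X$ is sufficiently positive. Since $-K_X$ is the sum of the toric boundary divisors, this is ensured by $s \geq n+1$. The other lower bound $s \geq ne$ is imposed by $\Phi^n$ itself having support in $ne\Q$. Taking $s := \max(n+1, ne)$ and applying the standard correspondence between global sections of $L^{\otimes s}$ and polynomials with support in $s\Q$ yields the bound \eqref{basse}. The main obstacle I anticipate is verifying $\Phi^n R^f = 0$ globally on $X$ rather than only on $\C^n$, and securing vanishing of $T$ with the sharp constant $n+1$ independent of $\Q$: the first point is handled because $R^f$ is supported on the compact set $Z \subset \C^n$, while the second requires Demazure's theorem on the smooth refinement $X$ together with the fact that birational pullback does not enlarge the Newton polytope of the associated polynomials.
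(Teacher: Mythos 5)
Your overall strategy is the same as the paper's: pass to a smooth toric compactification on which $\Q$ lives, form the residue current $R$ of the Koszul complex of $f=(f_1,\ldots,f_m)$, use the absence of zeros at infinity to conclude that $R$ is supported in the affine chart $\C^n$ (equivalently $\1_{V_\infty}R=0$), use the Brian{\c c}on--Skoda property of Koszul residue currents together with $|\Phi|\leq C|F|$ on $\C^n$ to get $\Phi^n R=0$, and then kill the global obstruction by toric vanishing. All of that matches the paper's proof.

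There is, however, a genuine gap in your vanishing step, precisely where the constant $n+1$ has to be produced. The obstruction to globalizing the local solution is not $H^{0,k}(X,L^{\otimes s})$ but $H^{0,k}\bigl(X,\,L^{\otimes s}\otimes \Lambda^{k+1}E^*\bigr)$ for $1\leq k\leq n$, where $E=\O(D_\Q)^{\oplus m}$; these are direct sums of copies of $\O(D_{(s-k-1)\Q})$. The correct reason these groups vanish for $s=\max(n+1,ne)$ is elementary: for $k\leq n$ one has $s-k-1\geq 0$, so each summand $\O(D_{(s-k-1)\Q})$ is generated by its global sections (nef), and nef line bundles on complete toric varieties have no higher cohomology. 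Your justification--that $sD_\Q+K_X$ is ``sufficiently positive'' for $s\geq n+1$ because $-K_X$ is the sum of the toric boundary divisors--does not establish this: on the smooth refinement the number of boundary divisors generally exceeds $n+1$, the divisor $D_\Q$ is only nef (not ample) there, and in any case Kodaira-type positivity of $sD_\Q+K_X$ is neither necessary nor the source of the bound. Applied to $H^{0,k}(X,L^{\otimes s})$ as you wrote it, Demazure vanishing would require no lower bound on $s$ at all, so your argument as stated cannot explain why $n+1$ (rather than, say, $1$) appears in \eqref{basse}. Once the obstruction bundles are identified correctly, the rest of your argument goes through and coincides with the paper's proof; a minor additional point is that the regular refinement must be chosen to keep the first orthant as a cone, so that $X$ really is a compactification of $\C^n$ and $V_\infty$ is well defined.
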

The assumption that the $F_j$ have no common zeros at infinity could be replaced by a less restrictive assumption, see Remark ~\ref{relaxa}. 
If $\Q=d\Sigma^n$, then \eqref{basse} reads $\deg(F_j G_j)\leq \max ((n+1)d, n \deg \Phi)$.

\smallskip

Morally, Theorems ~\ref{maxthm} and ~\ref{macbs} say that when the $F_j$ have no zeros at infinity and $\supp \Phi$ is large enough compared to $\supp F_j$, then the bounds on $\supp (F_jG_j)$ in \eqref{ideal} and \eqref{bs} are as good as possible; in fact, $\supp (F_jG_j)$ is then bounded by $\supp \Phi$ and $\supp \Phi^n$, respectively. Andersson-G\"otmark, ~\cite[Thm~1.3]{AG}, and Hickel ~\cite[Thm~1.1]{H} proved effective Max Noether's and Brian{\c c}on-Skoda Theorem's, respectively, in which they allow common zeros at infinity. Then typically terms of size $d^n$ appear, cf.  \eqref{kollupp}.

\smallskip

Let us sketch the idea of the proofs of our results. A standard way of reformulating the kind of division problems we consider is the following. There are polynomials $G_j$ that satisfy \eqref{hilbert} and $\supp (F_j G_j)\subseteq c \poly$ if and only if there are sections $g_j$ of line bundles $\O(D_{(c-1)\poly})$ over $X_\poly $ such that 
\begin{equation}\label{mja}
\sum_{j=1}^m f_j g_j = \psi,
\end{equation}
where $f_j$ and $\psi$ are sections of line bundles $\O(D_\poly)$ and $\O(D_{c\poly})$ over $X_\poly$ corresponding to $F_j$ and $\Phi^\nu$, respectively. Now there is a local solution to \eqref{mja} on $X_\poly$ if $\psi$ annihilates a certain residue current, see Section ~\ref{rescurr}. 
To obtain a global solution to \eqref{mja} the constant $c$ has to be large enough so that certain Dolbeault cohomology on $X_\poly$ vanishes. By analyzing when these conditions are satisfied we obtain our results.

The proofs of Theorems ~\ref{mac}-~\ref{macbs} occupy Section ~\ref{results}. In sections ~\ref{rescurr} and ~\ref{toric} we provide some necessary background on residue currents and toric varieties, respectively.

\section{Residue currents}\label{rescurr}
Let $f_1, \ldots, f_m$ be holomorphic functions whose common zero set $V_f=\{f_1=\ldots=f_m=0\}$ has codimension $m$. Then the \emph{Coleff-Herrera product}, introduced in ~\cite{CH},
\begin{equation*}
R^f_{CH} = \dbar \left [\frac{1}{f_1}\right ]\wedge \cdots \wedge \dbar \left [\frac{1}{f_m}\right ],
\end{equation*}
represents the ideal $(f)$ generated by the $f_j$ in the sense that it has support on $V_f$ and moreover a holomorphic function $\psi$ is in $(f)$ if and only if the current $\psi R^f_{CH}$ vanishes, see ~\cite{DS, P}.

When $\codim V_f < m$, there is no such canonical residue current associated with $f_1,\ldots, f_m$. Passare-Tsikh-Yger, ~\cite{PTY}, constructed residue currents by means of the Bochner-Martinelli kernel that generalize the Coleff-Herrera product to when the codimension of $V_f$ is arbitrary. Their construction was later developed by Andersson, ~\cite{A}, and by Andersson and the author, ~\cite{AW}. 

\begin{thm}\label{residysats}
Assume that $E_0, E_1, \ldots, E_N$ are Hermitian holomorphic vector bundles over a complex manifold $X$ of dimension $n$, and assume that $E_0$ has rank 1. Moreover assume that the complex  
\begin{equation}\label{plex}
0\longrightarrow E_N\stackrel{f^N}{\longrightarrow}\ldots 
\stackrel{f^3}{\longrightarrow}E_2
\stackrel{f^2}{\longrightarrow}E_1
\stackrel{f^1}{\longrightarrow}E_0
\end{equation}
is exact outside an analytic set $Z$ of positive codimension. Then one can construct an $\End (\bigoplus_k E_k)$-valued residue current $R$ on $X$, which has support on $Z$ and satisfies the following:
\begin{enumerate}
\item[(a)]
If $\psi$ is a holomorphic section of $E_0$ that annihilates $R$, i.e., the current $R \psi$ vanishes, then $\psi$ is in the ideal sheaf $\Im f^1$ generated by the image of $f^1$.  
\item[(b)]
If the associated complex of locally free sheaves of $\O$-modules of sections of $E_k$ 
\begin{equation}\label{sheaf}
0\longrightarrow\O(E_N)\stackrel{f^N}{\longrightarrow}\ldots\stackrel{f^2}{\longrightarrow}
\O(E_1)\stackrel{f^1}{\longrightarrow}\O(E_0)
\end{equation}
is exact, then $\psi\in\mathcal \Im f^1$ if and only if $R\psi=0$. 
\item[(c)]
Assume that $f$ is a holomorphic section of a Hermitian vector bundle $E$ of rank $m$ over $X$ and that \eqref{plex} is the Koszul complex of $f$, i.e., $E_k=\Lambda^k E^*$ and $f^k$ is contraction (interior multiplication) with $f$. Moreover assume that $\psi$ locally satisfies that 
\begin{equation*}
|\psi|\leq C |f|^{\min (m,n)}
\end{equation*}
for some constant $C$. Then $R\psi=0$.
\end{enumerate}
\end{thm}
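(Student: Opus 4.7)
The plan is to construct the residue current $R$ from the Hermitian structure and verify the three assertions in turn.

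\textbf{Step 1: Construction of $R$.} Using the Hermitian metrics, I would define on $X\setminus Z$ the minimal right inverses $\sigma_k\colon E_{k-1}\to E_k$ of $f^k$: the map $\sigma_k$ vanishes on $(\Im f^k)^\perp$ and inverts $f^k$ on its image, and is smooth on $X\setminus Z$ because \eqref{plex} is pointwise exact there. From these one builds an $\End(\bigoplus_k E_k)$-valued form $u$ on $X\setminus Z$, a sum of terms of the form $\sigma(\dbar\sigma)^\ell$, satisfying $\nabla u=I$ for $\nabla=f-\dbar$. The crucial analytic step is to extend $u$ to a global current $U$ on $X$; this is done by regularization $|F|^{2\lambda}u$ for a global section $F$ cutting out $Z$, with analytic continuation of the resulting distribution-valued function to $\lambda=0$ carried out as in \cite{A} using Hironaka's resolution of singularities. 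Setting $R=I-\nabla U$, the resulting current has support in $Z$ since $\nabla u=I$ already off $Z$.

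\textbf{Step 2: Parts (a) and (b).} Decompose $U=\sum_{k\geq 1}U_k$ and $R=\sum_{k\geq 0}R_k$ where $U_k$ and $R_k$ take values in $\Hom(E_0,E_k)$ with bidegrees $(0,k-1)$ and $(0,k)$ respectively. The identity $\nabla U=I-R$ then reads $f^1 U_1=I-R_0$ and $f^{k+1}U_{k+1}-\dbar U_k=-R_k$ for $k\geq 1$. For holomorphic $\psi$ with $R\psi=0$, setting $u_k:=U_k\psi$ gives the current equations $f^1 u_1=\psi$ and $\dbar u_k=f^{k+1}u_{k+1}$. In case (b), a descending induction on a Stein neighborhood, using Dolbeault solvability of $\dbar$ together with the sheaf exactness of \eqref{sheaf}, modifies $u_1$ by an image-of-$f^2$ term to become holomorphic, producing $\psi=f^1 g_1$. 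In case (a) one does not assume sheaf exactness, so instead one locally replaces \eqref{plex} by a free resolution of the coherent sheaf $\O(E_0)/\Im f^1$, applies (b) to its associated residue current, and transfers the vanishing via the comparison theorem for residue currents associated to different resolutions (valid off $Z$, where $\sigma$ is determined pointwise). The converse of (b) is then immediate: when \eqref{sheaf} is exact, $\nabla R=0$ forces $Rf^1=0$, so $\psi=f^1 g$ gives $R\psi=0$.

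\textbf{Step 3: Part (c).} For the Koszul complex, $\sigma_k$ is (up to combinatorial factors) contraction with the dual section $\bar f/|f|^2$, and the resulting $R$ is the Bochner-Martinelli residue current of Passare-Tsikh-Yger. To verify $R\psi=0$ under $|\psi|\leq C|f|^{\min(m,n)}$, pass to a log-resolution $\pi$ of $V_f$ in which $\pi^* f$ becomes locally a monomial times a nonvanishing section. The pullback of each component of $R$ then reduces to an elementary principal value residue of the form $\dbar[1/z^{\alpha_1}]\wedge\cdots\wedge\dbar[1/z^{\alpha_p}]\wedge\eta$ with $p\leq\min(m,n)$, and the divisibility of $\pi^*\psi$ by the appropriate monomial power, guaranteed by $|\psi|\leq C|f|^{\min(m,n)}$, kills each such term via the elementary identity $z^\beta\dbar[1/z^\alpha]=0$ whenever $\beta\geq\alpha$ componentwise.

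The main obstacle is Step~1: producing a global current extension $U$ of $u$ for which $\nabla U=I-R$ holds across $Z$, since a naive extension could leave uncontrolled distributional masses on $Z$. This is precisely why the $|F|^{2\lambda}$-regularization and Hironaka's resolution are essential. Part (c) rests on the same monomial reduction together with a Brian\c{c}on-Skoda-type elementary computation for principal-value currents, the exponent $\min(m,n)$ being exactly what the explicit Koszul formula for $\sigma$ demands.
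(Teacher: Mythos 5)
The paper does not actually prove Theorem~\ref{residysats}: it states it, gives a two-sentence heuristic (the current $R\psi$ is the obstruction to extending the local solution of $f^1g=\psi$ from $X\setminus Z$ across $Z$), and defers entirely to \cite{A} and \cite{AW}. Your proposal is a essentially correct reconstruction of the construction in those references: the minimal inverses $\sigma_k$, the form $u=\sum\sigma(\dbar\sigma)^\ell$ with $\nabla u=I$ off $Z$, the $|F|^{2\lambda}$-regularization with analytic continuation via Hironaka, $R=I-\nabla U$ supported on $Z$, and for (c) the monomialization argument of \cite{A} and \cite{PTY}.

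Two points in Step 2 deserve comment. First, your treatment of (a) is more roundabout than what is needed (and than what \cite{AW} does): no auxiliary free resolution and no comparison theorem for residue currents of different complexes is required. From $f^1u_1=\psi$ and $\dbar u_k=f^{k+1}u_{k+1}$ one argues by descending induction starting at $k=n+1$, where $u_{n+1}$ is automatically $\dbar$-closed because there are no nonzero $(0,q)$-currents with $q>n$; solving $\dbar$ locally for currents and correcting each $u_k$ by a term in the image of $f^{k+1}$ produces a holomorphic $u_1'$ with $f^1u_1'=\psi$, since $f^1f^2=0$. Sheaf exactness is never used here, and the comparison theorem you invoke is a later and heavier tool than the statement warrants. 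Second, the converse in (b) is not ``immediate'' from $\nabla R=0$: to conclude $Rf^1=0$ one must show that the components $R^\ell$ of the $\End(\bigoplus_kE_k)$-valued current with source $E_\ell$, $\ell\geq 1$, vanish, and this genuinely uses the exactness of \eqref{sheaf}, via the Buchsbaum--Eisenbud criterion (which forces $\codim Z_k\geq k$) combined with the dimension principle for the currents involved. With these two repairs your outline matches the proofs in \cite{AW} and \cite{A} that the paper relies on.
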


The idea of the proof of Theorem \ref{residysats} is that outside $Z$ one can obtain a local holomorphic solution to the division problem 
\begin{equation}\label{delta}
f^1g=\psi
\end{equation}
by means of \eqref{plex};
here $\psi$ is a section of $E_0$ and $g$ a section of $E_1$. The residue current $R\psi$ appears as an obstruction when one tries to extend the solution from $X\setminus Z$ to $X$; we refer to ~\cite{A} and ~\cite{AW} for details. 

The explicitness of the current $R$ of course directly depends on the explicitness of \eqref{plex}. If \eqref{plex} is the Koszul complex of $f$, then $R$ has support on the zero locus $V_f$ of $f$ and locally the coefficients of $R$ are the residue currents introduced by Passare-Tsikh-Yger, ~\cite{PTY}. In particular, if $\codim V_f=m$, then $R$ is locally a Coleff-Herrera product. Note that in this case $\Im f^1$ is the ideal sheaf $\mathcal J (f)$ generated by $f$. 

Morally, the residue current $R$ is the obstruction to solve \eqref{delta} locally. To obtain a global solution one also needs certain $\dbar$-cohomology on $X$ to vanish. 
The construction of the currents in ~\cite{AW} implies the following, cf.  ~\cite[Prop.~6.1]{AW}:

\begin{thm}\label{forsvinnande}
Let $L$ be a line bundle over $X$. Assume that 
\begin{equation}\label{hos}
H^{0,q}(X, L\otimes E_{q+1})=0
\end{equation}
for $1\leq q \leq \min (N-1, n)$. Let $\psi$ be a holomorphic section of $L\otimes E_0$. 
If $R\psi=0$, then there is a global section $g$ of $L\otimes E_1$ that satisfies \eqref{delta}.
\end{thm}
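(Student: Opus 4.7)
The plan is to leverage the homotopy formula $\nabla_f U = I - R$ from the Andersson--Wulcan construction \cite{AW}, where $\nabla_f := f - \dbar$ and $U = \sum_{k \ge 1} U_k$, with $U_k$ a $\Hom(E_0, E_k)$-valued current of bidegree $(0,k-1)$. Applying this identity to the holomorphic section $\psi$ of $L\otimes E_0$ and using the hypothesis $R\psi = 0$, the components $u_k := U_k\psi$ (currents of bidegree $(0,k-1)$ with values in $L\otimes E_k$) satisfy
\[
f^1 u_1 = \psi, \qquad \dbar u_k = f^{k+1} u_{k+1} \quad (k\ge 1).
\]
Since $u_k \equiv 0$ for $k > N$ (no bundle $E_{N+1}$) and also for $k-1 > n$ (no $(0,k-1)$-forms on an $n$-dimensional manifold), one has $u_k \equiv 0$ for $k > K := \min(N, n+1)$, and in particular $\dbar u_K = 0$.

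The next step is a downward induction on $k$, replacing $u_k$ by a $\dbar$-closed correction while preserving the relation $f^1 u_1 = \psi$. Starting from $u_K$: since $K-1$ lies in the range $1 \le q \le \min(N-1, n)$, the hypothesis produces a current $v_K$ with $\dbar v_K = u_K$. Set $u_{K-1}' := u_{K-1} - f^K v_K$; then
\[
\dbar u_{K-1}' = f^K u_K - f^K \dbar v_K = 0,
\]
and $f^{K-1} u_{K-1}' = f^{K-1} u_{K-1}$ because $f^{K-1}\circ f^K = 0$. Iterating: for each $k = K-1, \ldots, 2$, the vanishing $H^{0,k-1}(X, L\otimes E_k) = 0$ furnishes $v_k$ with $\dbar v_k = u_k'$, and one sets $u_{k-1}' := u_{k-1} - f^k v_k$, again producing a $\dbar$-closed current with the same image under $f^{k-1}$.

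At the end of the descent, $g := u_1'$ is a $\dbar$-closed current of bidegree $(0,0)$ with values in $L\otimes E_1$, and by construction $f^1 g = \psi$. Ellipticity of $\dbar$ upgrades $g$ to a global holomorphic section, as required. The main delicate point is that each cohomology group invoked in the descent has index $q = k-1$ in the allowed range $1\le q \le \min(N-1, n)$; the choice $K = \min(N, n+1)$ is precisely what makes $K-1 = \min(N-1, n)$, so the top step is covered by the hypothesis. A secondary care is that the $v_k$ are a priori currents rather than smooth forms, but this is harmless since the $f^k$ are holomorphic bundle maps and the final claim of holomorphicity for $g$ follows from the regularity of $\dbar$ on $(0,0)$-currents.
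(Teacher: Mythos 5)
Your argument is correct and is precisely the standard Andersson--Wulcan homotopy argument ($\nabla_f U = I - R$ applied to $\psi$, followed by a descending induction that trades each $u_k$ for a $\dbar$-closed correction using the assumed cohomology vanishing); the paper itself gives no proof but refers to \cite[Prop.~6.1]{AW}, which is proved exactly this way. The bookkeeping $K=\min(N,n+1)$, $K-1=\min(N-1,n)$ matching the hypothesis, and the remark that the Dolbeault groups may be computed with currents, are the right points to check and you have them.
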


The current $R$ allows for multiplication with characteristic functions of varieties and more generally constructible sets in such a way that ordinary calculus rules hold, see ~\cite{AW2}. In particular, if $V\subseteq X$ is a variety, then $R \psi=0$ if and only if $(\1_V R)\psi=0$ and $(\1_{X\setminus V}R)\psi=0$. Moreover $R$ is said to have the \emph{Standard Extension Property (SEP)} in the sense of Bj\"ork, ~\cite{Bj}, if $\1_W R=0$ for all subvarieties $W \subset V_f$ of positive codimension.

\section{Toric varieties from polytopes}\label{toric}
For a general reference on toric varieties, see ~\cite{F}. A toric variety can be constructed from a \emph{fan} $\Delta$, which is a certain collection of $\mathbb Z^n$ cones, by gluing together copies of $\C^n$ corresponding to the $n$-dimensional cones of $\Delta$; we denote the resulting toric variety by $X_\Delta$. 
Let $\poly$ be a lattice polytope in $\R^n$. Then $\poly$ determines a fan $\Delta_\poly$, the so-called \emph{normal fan} of $\poly$, whose rays correspond to the normal directions of the faces of maximal dimension of $\poly$. The corresponding toric variety $X_\poly=X_{\Delta_\poly}$ is projective, see \cite[Section~VII.3]{Ewald}.

A toric variety $X_\Delta$ is smooth if and only if each cone in $\Delta$ is generated by a part of a basis for the lattice $\Z^n$. Such a fan is said to be \emph{regular}. The fan $\Delta_\poly$ is regular precisely when $\poly$ is smooth, cf. the introduction. For each fan $\Delta$ there exists a refinement $\widetilde \Delta$ of $\Delta$ such that $X_{\widetilde \Delta} \to X_\Delta$ is a resolution of singularities. Also if $\Delta_1$ and $\Delta_2$ are two different fans, there exists a regular fan $\widetilde\Delta$ that refines both $\Delta_1$ and $\Delta_2$. If $\Delta$ is a refinement of $\Delta_\poly$ we say that $\Delta$ and $\poly$ are  \emph{compatible}.

Assume that $\poly$ is compatible with $\Delta$. Then $\poly$ defines a divisor $D_\poly$ on $X_\Delta$ such that the global holomorphic sections of the line bundle $\O(D_\poly)$ correspond precisely to the polynomials with support in $\poly$. Moreover $\O(D_\Q)$ is generated by its sections, and if $\Delta=\Delta_\poly$, then $\O(D_\Q)$ is ample. Also, $\O(D_\Q)\otimes\O(D_{\mathcal Q})=\O(D_{\Q+\mathcal Q})$. 

If $\Delta$ is compatible with a polytope and $L$ is a line bundle over $X_\Delta$ that is generated by its sections, then $H^{0,q}(X_\Delta,L)=0$ for all $q\geq 1$.

In the situation of Theorems ~\ref{maxthm} and ~\ref{macbs} we want to consider toric varieties that are compactifications of $\C^n$. 
Assume that $\Delta$ contains the first orthant $\sigma_0$ as an $n$-dimensional cone; observe that if $\Q\subseteq \R^n_+$ contains the origin, then one can find such a $\Delta$, which is regular and compatible with $\Q$. Then we can identify the corresponding affine chart $\U_{\sigma_0}$ with $\C^n$; we refer to the complement $X_\Delta\setminus\U_{\sigma_0}$ as the \emph{variety at infinity} and denote it by $V_\infty$. If $\poly$ is compatible with $\Delta$ and moreover contains the origin, then in local coordinates in $\U_{\sigma_0}=\C^n$, a section $\psi$ of $\O(D_\poly)$ coincides with the corresponding polynomial $\Psi$ in $\C^n$, so that $\psi$ can really be seen as a homogenization of $\Psi$, see ~\cite{Cox} and also ~\cite[Section~3.4]{W}.

\section{Proofs}\label{results}
In Theorem ~\ref{mac} the $F_j$ are assumed to have no common zeros even at infinity. This should be interpreted as that the corresponding sections $f_j$ of $\O(D_{\Q})$ lack common zeros in $X_\Delta$, where $\Delta$ is compatible with $\Q=\np(F_1,\ldots, F_m)$. Observe that whether the $f_j$ have common zeros in $X_\Delta$ in fact only depends on $\Q$ and not on the particular choice of $\Delta$, as long as it is compatible with $\Q$. In Theorems ~\ref{maxthm} and ~\ref{macbs}, $\Q$ is assumed to contain the origin. It follows that $\Delta$ can be chosen compatible with $\Q$ so that it contains the first orthant as a cone. The assumption that the $f_j$ lack common zeros at infinity should be interpreted as that, given such a $\Delta$, the corresponding sections of $\O(D_\Q)$ lack common zeros at $V_\infty$ in $X_\Delta$.

Consider polynomials $F_j$ with support in polytopes $\Q_j$. 
Whether or not the $F_j$, or rather the corresponding sections $f_j$  of line bundles $\O(D_{\Q_j})$, have common zeros (at infinity) clearly depends on the polytopes $\Q_j$. Assume that $f_j$ are sections of a line bundle $\O(D_\Q)$ over $X_\Delta$, where $\Delta$ is compatible with $\Q$. Then the $f_j$ do have common zeros unless $\Q=\np(F_1,\ldots, F_m)$ and they have common zeros at infinity unless $\Q$ is the convex hull of the Newton polytope and the origin. On the other hand, any generic choice of $n+1$ sections of $\O(D_\Q)$ will lack common zeros and any choice of $n$ polynomials with support in $\Q$ will lack common zeros at $V_\infty$, see for example ~\cite[Section 6.2]{W} or ~\cite[Lma~4.1]{Tuitman}. Thus the sparse versions of Macaulay's and Max Noether's results generalize their classical counterparts in the sense that they apply to more general situations.

Theorem ~\ref{mac} is a consequence of the following more general result, which is due to Tuitman ~\cite{Tuitman}; we include a proof for completeness. Recall that the polytope $\mathcal Q$ is a \emph{summand} of the polytope $\Q$ if there exist another a polytope $\mathcal S$ such that $\Q=\mathcal Q+\mathcal S$.

\begin{thm}\label{tuit}[Tuitman ~\cite{Tuitman}]
Let $F_1,\ldots, F_m$, and $\Phi$ be polynomials in $\C^n$. Let $\Q_j$ and $\Q$ be polytopes that contain the support of the $F_j$ and $\Phi$, respectively. Assume that the $F_j$ have no common zeros even at infinity, meaning that the corresponding sections of line bundles $\O(D_{\Q_j})$ over a toric variety lack common zeros. Assume that $\Q_{j_1}+\cdots + \Q_{j_{q}}$ is a summand of $\Q$ for all $1\leq q\leq \min(m,n+1)$ and $\J=\{j_1,\ldots, j_q\}\subseteq \{1,\ldots, m\}$.
Then there are polynomials $G_j$ that satisfy \eqref{ideal} and
\begin{equation}\label{tuttut}
\supp (F_j G_j)\subseteq \Q.
\end{equation}
\end{thm}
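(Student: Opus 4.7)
The plan is to recast the problem as a global division problem on a smooth toric variety and then apply the residue-current machinery of Section~\ref{rescurr}. First I would fix a smooth fan $\Delta$ compatible simultaneously with $\Q$, each $\Q_j$, and every Minkowski sum $\sum_{j\in J}\Q_j$ for $J\subseteq\{1,\dots,m\}$ with $|J|\leq\min(m,n+1)$; a regular common refinement of the finitely many normal fans involved, as described in Section~\ref{toric}, does the job. Set $X=X_\Delta$, $L_j=\O(D_{\Q_j})$ and $L=\O(D_\Q)$, and let $f_j\in H^0(X,L_j)$ and $\psi\in H^0(X,L)$ be the sections corresponding to $F_j$ and $\Phi$. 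The no-common-zeros hypothesis persists under refinement, so the $f_j$ have no common zeros on $X$. Applied with $|J|=1$ the summand assumption gives $\Q=\Q_j+\mathcal S_j$ for some polytope $\mathcal S_j$, so that $L\otimes L_j^{-1}=\O(D_{\mathcal S_j})$; a solution $g_j\in H^0(X,\O(D_{\mathcal S_j}))$ of $\sum_j f_j g_j=\psi$ on $X$ will correspond via the affine chart to polynomials $G_j$ with $\supp(F_jG_j)\subseteq\Q_j+\mathcal S_j=\Q$, which is exactly \eqref{tuttut}.

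To produce such $g_j$ I would invoke Theorems~\ref{residysats} and~\ref{forsvinnande} applied to the Koszul complex of $f=(f_1,\dots,f_m)$ regarded as a section of $E=\bigoplus_j L_j$, so that $E_k=\Lambda^k E^*$ and $f^k=\iota_f$. Since the $f_j$ have no common zeros on $X$, this complex is exact everywhere; the exceptional set $Z$ in Theorem~\ref{residysats} is empty, the residue current $R$ therefore vanishes, and the local obstruction $R\psi=0$ is automatic.

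For the global obstruction I would appeal to Theorem~\ref{forsvinnande} with twist $L$. The twisted complex satisfies
\[
L\otimes E_{q+1}=\bigoplus_{|J|=q+1}\O\!\left(D_{\Q-\sum_{j\in J}\Q_j}\right),
\]
and the range $1\leq q\leq\min(m-1,n)$ translates to $|J|=q+1\leq\min(m,n+1)$, precisely the range in which the summand hypothesis forces each $\Q-\sum_{j\in J}\Q_j$ to be a lattice polytope, with $\Delta$ compatible to it. Each summand is then a line bundle on $X_\Delta$ generated by its global sections, and the cohomology-vanishing statement at the end of Section~\ref{toric} gives $H^{0,q}(X,L\otimes E_{q+1})=0$ throughout the required range. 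Theorem~\ref{forsvinnande} then delivers a global $g=(g_j)$ with $f^1g=\psi$, which proves the theorem after descending to $\C^n$.

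The point that requires care is arranging everything on a single smooth toric model: all the Minkowski differences $\Q-\sum_{j\in J}\Q_j$ appearing along the Koszul complex must be honest lattice polytopes, and $\Delta$ must be compatible with each of them at once. The summand hypothesis is tailored exactly so that these conditions hold for $|J|\leq\min(m,n+1)$, which is precisely the range needed to kill the Dolbeault cohomology entering Theorem~\ref{forsvinnande}; the rest of the argument is a direct application of the machinery already set up in Sections~\ref{rescurr} and~\ref{toric}.
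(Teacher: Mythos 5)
Your proposal is correct and follows essentially the same route as the paper: identify the $F_j$ and $\Phi$ with sections over a smooth toric compactification compatible with all the polytopes, observe that the Koszul-complex residue current vanishes because the $f_j$ have no common zeros, use the summand hypothesis to make each line bundle $\O(D_{\Q-\sum_{j\in J}\Q_j})$ generated by its sections and hence kill the relevant Dolbeault cohomology, and conclude via Theorem~\ref{forsvinnande}. Your bookkeeping of the index range ($|J|=q+1\leq\min(m,n+1)$ matching $1\leq q\leq\min(N-1,n)$) and your explicit attention to the compatibility of $\Delta$ with the Minkowski differences are, if anything, slightly more careful than the paper's write-up.
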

In particular, we can let $\Q=\sum_{j=1}^m\Q_j$. 
Also, if we choose $\Q$ as $\max(n+1,e)\np(F_1,\ldots,F_m)$ we get back Theorem ~\ref{mac}.

\begin{proof}
Let $\Delta$ be a regular fan that is compatible with $\Q_1,\ldots, \Q_m$, and $\Q$, let $E$ be the bundle $\O(D_{\Q_1})\oplus\cdots\oplus\O(D_{\Q_m})$ over $X_\Delta$, and let $L$ be the line bundle $\O(D_{\Q})$. We identify polynomials with support in $\Q_j$ and $\Q$ with sections of $\O(D_{\Q_j})$ and $L$, respectively. Accordingly, let $f_j$, $f$, and $\psi$ be the sections of $\O(D_{\Q_j})$, $E$, and $L$ corresponding to $F_j$, the tuple $F_1,\ldots, F_m$, and $\Phi$, respectively.

Let \eqref{plex} be the Koszul complex of $f$ and let $R$ be the associated residue current. By assumption, the $f_j$ have no common zeros, and hence $R=0$. 

Now 
\begin{equation}\label{msri}
L\otimes E_q=L\otimes \Lambda^qE^*=\bigoplus_{|\J|=q}\O(D_\Q-(D_{\Q_{j_1}}+\cdots + D_{\Q_{j_q}})),
\end{equation}
Since for each term in the right hand side of \eqref{msri}, $\Q_{j_1}+\cdots + \Q_{j_q}$ is a summand of $\Q$, $\O(D_{\Q-(\Q_{j_1}+\cdots + \Q_{j_q})})$ is generated by its sections, see Section ~\ref{toric}. Hence \eqref{hos} holds for $1\leq q\leq n$, cf. (the proof of) Theorem ~4.1 in \cite{W}. 

Now Theorem ~\ref{forsvinnande} asserts that we can find a section $g=(g_1,\ldots, g_m)$ of $L\otimes E^*$ that satisfies \eqref{delta}, and thus polynomials $G_j$ that satisfy \eqref{ideal} and \eqref{tuttut}. 
\end{proof} 

The original proof by Tuitman is very similar to our proof. In fact, the residue current does not really play a role in our proof, since it trivially vanishes.

Theorem ~\ref{macbs} is proved along the same lines as Theorem ~\ref{mac}, using  residue currents constructed from the Koszul complex. It would be possible to give a more general formulation of Theorem ~\ref{macbs}, that would take into account that the $F_j$ might have different supports, as was done in Theorem ~\ref{tuit}.
\begin{proof}[Proof of Theorem ~\ref{macbs}]
Let $\Delta$ be a regular fan that is compatible with $\Q$ and that contains the first orthant as cone. Moreover, let $E$ be the vector bundle $\O(D_\Q)^{\oplus m}$ over $X_\Delta$, and let $L$ be the line bundle $\O(D_{\max(n+1,ne)\Q})$. Let $f_j$, $f$, and $\psi$ be the sections of $\O(D_\Q)$, $E$, and $L$ corresponding to $F_j$, the tuple $F_1,\ldots, F_m$, and $\Phi^n$, respectively.

Let \eqref{plex} be the Koszul complex of $f$ and let $R$ be the associated residue current. By assumption, the $f_j$ have no common zeros at infinity, and hence $\1_{V_\infty}R=0$. Moreover, since $\Phi$ is in the integral closure of $(F)$ in $\C^n$, $(\1_{\C^n}R)\psi=0$ by Theorem ~\ref{residysats} ~(c) and the end of Section ~\ref{toric}. 

By Section ~\ref{toric}, $L\otimes E_q=L\otimes \Lambda^qE^*$ is a direct sum of line bundles $\O(D_{(\max(n+1,ne)-q)\Q})$, and since $\O(D_{c\Q})$ is generated by its sections if $c\geq 0$, by Section ~\ref{toric}, \eqref{hos} holds for $1\leq q\leq n$. 

Now Theorem ~\ref{forsvinnande} asserts that we can find a section $g=(g_1,\ldots, g_m)$ of $L\otimes E^*$ that satisfies \eqref{delta}, and thus polynomials $G_1,\ldots, G_m$ in $\C^n$ that satisfy \eqref{bs} and \eqref{basse}.
\end{proof}

\begin{proof}[Proof of Theorem ~\ref{maxthm}]
Let $E$ be the vector bundle $\O(D_\Q)^{\oplus m}$ over $X_\Q$, and let $f$ be the section of $E$ corresponding to $F_1,\ldots,F_m$. Let $E_0$ be the trivial bundle of rank 1 over $X_\Q$, let $E_1=E^*$, and let $f^1$ be multiplication with $f$. Since $X_\Q$ is projective, $E_1\stackrel{f^1}{\to} E_0$ can be continued to a complex \eqref{plex}, such that the associated complex \eqref{sheaf} is exact, see for example ~\cite[Ex.~1.2.21]{Laz1}. Since, by assumption, $\Q$ is smooth, the line bundle $\O(D_\Q)$ over $X_\Q$ is ample and thus for some large enough number $\nu_F$, $H^{0,q}(X_\Q,\O(D_\Q)^{\otimes \nu} \otimes E_{q+1})=0$ for $1\leq q\leq \min (N-1,n)$ and $\nu\geq\nu_F$. In particular, $L=\O(D_{\max(\nu_F,e)\Q})$ satisfies \eqref{hos} for $1\leq q\leq \min (N-1,n)$. 

The assumption that $\Q$ contains the origin and the support of the coordinate functions $z_1,\ldots, z_n$ implies that the first orthant in $\R^n$ is a cone of $\Delta_\Q$. Let $R$ be the residue current associated with \eqref{plex} and let $\psi$ be the section of $L$ corresponding to $\Phi$. 
By assumption, the $f_j$ have no common zeros at infinity, and hence $\1_{V_\infty}R=0$. Moreover, since \eqref{sheaf} is exact and $\Phi\in(F)$ in $\C^n$, $(\1_{\C^n}R)\psi=0$ by Theorem ~\ref{residysats} (b) and the end of Section ~\ref{toric}. 

Now Theorem ~\ref{forsvinnande} asserts that we can find a section $g=(g_1,\ldots,g_m)$ of $L\otimes E_1=L\otimes E^*$ that satisfies \eqref{delta}, and thus polynomials $G_1,\ldots, G_m$ in $\C^n$ that satisfy \eqref{ideal} and \eqref{fetflasket}.
\end{proof} 

The constant $\nu_F$ in Theorem ~\ref{maxthm} depends on the degrees of the mappings in the resolution \eqref{plex}, which are closely related to the Castelnuovo-Mumford regularity of $(F)$, see ~\cite[Chapter 20.5]{Eisenbud}.

\begin{remark}\label{relaxa}
Observe that the proofs of Theorems ~\ref{maxthm} and ~\ref{macbs} only use that $R$ vanishes along $V_\infty$, i.e., $\1_{V_\infty}R=0$. In fact, this allows us to replace the assumptions that the $F_j$ lack common zeros at infinity by less restrictive assumptions. 

Let $Z_k$ be the set where the mapping $f^k$ in \eqref{plex} does not have optimal rank. When \eqref{sheaf} is exact $R$ admits a decomposition $R=\sum_k\1_{Z_k\setminus Z_{k-1}}R$, where $\1_{Z_k\setminus Z_{k-1}}R$ has support on and the SEP with respect to $Z_k$, see ~\cite[Ex.~7]{AW2}. Thus in Theorem ~\ref{maxthm} we could replace the assumption that the $F_j$ lack common zeros at infinity by the assumption that the $Z_k$ have no irreducible components contained in $V_\infty$.

Let $\{V_j\}$ be the set of so-called \emph{distinguished subvarieties} of $\mathcal J(f)$, see ~\cite[p.~263]{Laz}, and let $R$ be the residue current constructed from the Koszul complex of $f$. It follows from the construction that $R$ admits a decomposition $R=\sum\1_{V_j}R$, where $\1_{V_j}R$ has support on and the SEP with respect $V_j$, see for example ~\cite{AG}. Hence in Theorem ~\ref{macbs} we could replace the assumption that $F_j$ lack common zeros at infinity by the assumption that $\mathcal J(f)$ has no distinguished subvarieties contained in ~$V_\infty$.
\end{remark}

Thanks to the referee for many helpful suggestions.

\def\listing#1#2#3{{\sc #1}:\ {\it #2},\ #3.}

\end{document}